\DeclareMathOperator\soc{Soc}
\DeclareMathOperator\Top{Top}
\DeclareMathOperator\Hom{Hom}
\DeclareMathOperator\Ext{Ext}
\DeclareMathOperator\im{Im}
\DeclareMathOperator\cone{Cone}
\DeclareMathOperator\id{id}
\newtheorem{mythm}{Theorem}
\newtheorem{myex}[mythm]{Example}
\newtheorem{myrem}[mythm]{Remark}
\newtheorem{mylem}[mythm]{Lemma}
\newtheorem{myprop}[mythm]{Proposition}
\newtheorem{mycor}[mythm]{Corollary}
\numberwithin{equation}{section}
\newcommand{\Z}{\mathbb{Z}}
\newcommand{\cat}{\mathcal{C}}
\newcommand{\der}{\kom(\cgmod)}
\newcommand{\kom}{\text{Com}}
\newcommand{\gvsp}{\Bbbk\text{-gMod}}
\newcommand{\cgmod}{\cat\text{-gMod}}
\newcommand{\I}{V}
\newcommand{\J}{W}
\begin{document}
\title{Koszulity of some path categories}
\author[B.~Dubsky]{Brendan Dubsky}
\date{\today}
\begin{abstract}
We prove Koszulity of certain linear path categories obtained from connected graphs with some infinite directed walk. These categories can be viewed as locally quadratic dual to preprojective algebras. 
\end{abstract}

\maketitle

\section{Introduction}\label{s1}
Koszulity of graded (finite-dimensional, unital and associative) algebras has been extensively studied as a graded algebra analogue of semi-simplicity, cf. \cite{Pr70,Ke94,BGS96}. Koszul theory can be generalized to the setting of graded $\Bbbk$-linear categories, which may be regarded as associative but not necessarily finite-dimensional or unital algebras, cf. \cite{MOS09}. 

In the present article, we prove Koszulity of a certain class of linear path categories of connected graphs which contain some infinite directed walk, where the morphism relations are ``locally quadratic dual'' to those of preprojective algebras. In the special case of finite graphs, the resulting categories may thus be viewed as certain Koszul algebras of the traditional kind, while for graphs with vertices of infinite valency the categories will fall just outside of the setup of \cite{MOS09}.

The original motivation for the study of these categories was a desire to show Koszulity of blocks of modules over Lie superalgebras, which appeared in \cite{Ma14}. Indeed, our categories are generalizations of the categories of these blocks. Another example of an infinite-dimensional category of this kind is found in the shape of a generalized Khovanov's diagram algebra in \cite{BS12}. 

To prove Koszulity we recursively construct projective resolutions --- that are linear up to increasingly high position --- of the simple modules over our categories. The recursive construction is based on three classes of short exact sequences of resolutions, and corresponding mapping cones. 

In Section 2 we define our categories of interest, as well as their grading and graded modules. We then state a few previously known general results on categories with this kind of grading, including an inexplicit characterization of indecomposable projectives. In Section 3 we describe more explicitly these modules, as well as a few other useful ones. These latter modules will turn out to be Koszul in Section 4, where we will use them to construct our resolutions of simples and finally prove that these are increasingly linear. 

\subsection*{Acknowledgements}
The author is very grateful to his supervisor Volodymyr Mazorchuk for a lot of stimulating discussion and help with putting this article together. 

\section{Preliminaries}\label{s2}

\subsection{The path category $\cat$}

Throughout the text we will consider some fixed simple connected graph $G$ which contains some infinite directed walk, i. e. which has an orientation such that there is an infinite sequence of edges $e_1,e_2,\dots$ fulfilling that for any $n\in\Z_{>0}$ the target vertex of $e_{n}$ is the source vertex of $e_{n+1}$. We will denote by $\I$ the vertex set of $G$, and for each $i\in \I$ by $\I_i$ the set of vertices adjacent to $i$. These conditions will be satisfied by any infinite tree, as well as any simple connected graph containing a cycle.

We obtain our category of interest from $G$ through the following steps. First let $Q$ be the quiver obtained by replacing each edge of $G$ with one arrow in each direction. Let $\mathcal{C}''$ be the path category of $Q$, i. e. the objects of $\cat''$ are the vertices of $Q$ and the morphisms are paths in $Q$ (including an empty path at each object to serve as an identity) with composition given by concatenation. Next, fix some algebraically closed field, $\Bbbk$, and let $\mathcal{C}'$ be the $\Bbbk$-linear category generated by $\mathcal{C}''$, i.e. the $\Bbbk$-linear category with the same objects as $\mathcal{C}''$, the morphism spaces of $\mathcal{C}''$ constituting $\Bbbk$-vector space bases of the morphism spaces of $\mathcal{C}'$ and composition is defined by bilinearity and composition in $\cat''$ of the basis elements. Finally define $\mathcal{C}=\mathcal{C}'/\mathcal{I}$, where $\mathcal{I}$ is the ideal of $\mathcal{C}'$ generated by the following relations, which will henceforth be referred to as the first and second kind of relations on $\cat$ respectively. 
\begin{enumerate}[$($i$)$]
\item
For arrows $b:i\rightarrow j$ and $a:j\rightarrow k$ such that $i\ne j$, $j\ne k$ and $k\ne i$

\begin{equation*}
\xymatrix{
i\ar[r]^b&j\ar[r]^a&k
}
\end{equation*}
set $ab= 0$.

\item
For arrows $a_1:i\rightarrow j$, $b_1:j\rightarrow i$, $b_2:j\rightarrow k$ and $a_2:k\rightarrow j$ such that $i\ne j$, $j\ne k$ and $k\ne i$

\begin{equation*}
\xymatrix{
i\ar@<.5ex>[r]^{a_1}&j\ar@<.5ex>[l]^{b_1}\ar@<-.5ex>[r]_{b_2}&k\ar@<-.5ex>[l]_{a_2}
}
\end{equation*}
set $a_1b_1= a_2b_2$. 

\end{enumerate}

\subsection{A grading on $\cat$, and its graded modules}

This subsection will largely follow \cite[Section 2]{MOS09}, to which we refer the reader for proofs and full detail.

The category $\mathcal{C}$ has a $\Z_{\ge 0}$-grading given by the decomposition 
\begin{equation*}
\cat(i,j)=\bigoplus_{n\in\Z}\cat(i,j)_n
\end{equation*}
of each morphism space $\cat(i,j)$, where the degree $n$ part $\cat(i,j)_n$ is the subspace spanned by the (images under the quotient by $\mathcal{I}$ of) $Q$-paths of length $n$. 
A graded module of $\cat$ is a $\Bbbk$-linear functor $F:\cat\rightarrow \gvsp$ from $\cat$ to the category of all $\Z$-graded $\Bbbk$-vector spaces, which preserves the degrees of morphisms. We denote by $\cgmod$ the category of all graded $\cat$-modules. 

Let $n$ range over $\Z$ and $i$ over the objects of $\cat$. We define the \emph{degree shift} functors 
\begin{equation*}
\langle n\rangle:\cgmod\rightarrow \cgmod
\end{equation*}
on objects $M\in\cgmod$ by 
\begin{equation*}
M\langle n\rangle(i)_m=M(i)_{n+m}
\end{equation*}
for all $m\in\Z$ (and trivially on morphisms). 

Now, because we allow for infinite $\I_i$, the category $\cgmod$ falls just short of being positively graded in the sense of \cite[Definition 1]{MOS09}, but the proofs of \cite[Lemmas 3, 4, 5]{MOS09} still go through, and we have the following results.
\begin{enumerate}[$($i$)$]
\item
The category $\cgmod$ is abelian. 

\item
The module $P_i=\cat(i,\_)$ is an indecomposable projective object of $\cgmod$. 

\item
The degree shifted tops $L_i\langle n\rangle$ of $P_i$ constitute the simple objects of $\cgmod$ up to isomorphism. 
\end{enumerate}
In fact, it is easily seen that $L_i\cong P_i/\cat(i,\_)_{>0}$ is the one-dimensional module which is annihilated by all of $\cat$ except $\Bbbk\id_i$, where $\id_i$ is the identity morphism at $i$. 

\subsection{Koszul categories and resolutions}

In analogy with the definition for the case of finite-dimensional unital graded algebras, we define a $\Z_{\ge 0}$-graded category to be \emph{Koszul} if for all simple graded modules $L$ that are concentrated in degree 0 we have
\begin{equation}\label{koszdef}
\Ext^n_{\cgmod}(L,L\langle -m\rangle)\ne 0\Longrightarrow m=n.
\end{equation}

We denote by $\der$ the category of (cochain) complexes in $\cgmod$. For an object $\mathbf{X}^\bullet$ of $\der$, we by $\mathbf{X}^n$ mean the module at position $n$. For $X\in\cgmod$, we denote by $\mathbf{X}^\bullet$ the complex defined by $\mathbf{X}^n=0$ for $n\ne 0$ while $\mathbf{X}^0=X$. 

By abuse of notation, we will by $\langle n\rangle$ also denote the endofunctors on $\der$ which act on objects by applying $\langle n\rangle:\cgmod\rightarrow\cgmod$ to the modules and morphisms in the complexes (and which act trivially on morphisms in $\der$). 

We may now formulate a sufficient (and in fact necessary, though we will not pursue this) criterion for Koszulity.
\begin{myprop}
\label{linres}
A $\Z_{\ge 0}$-graded category $\mathcal{B}$ is Koszul if for every $N\in\Z$, each simple graded $\mathcal{B}$-module $L$ concentrated in degree 0 has a graded projective resolution $\mathbf{P}^\bullet\in\der$ such that for all $n\ge N$, the module $\mathbf{P}^n$ is generated by $(\mathbf{P}^n)_n$. 
\end{myprop}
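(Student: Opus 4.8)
The plan is to compute the relevant $\Ext$ groups directly from the resolutions supplied by the hypothesis, and to exploit that $\Ext^n_{\cgmod}(L,L\langle -m\rangle)$ is the $n$-th cohomology of the complex obtained by applying $\Hom_{\cgmod}(-,L\langle -m\rangle)$ to \emph{any} graded projective resolution $\mathbf{P}^\bullet$ of $L$; in particular its value is independent of the chosen resolution. This independence is what lets me get away with the hypothesis handing us a possibly different resolution for each $N$: I will fix the homological degree $n$ of interest and compute $\Ext^n$ from the resolution associated to one convenient value of $N$.

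The key computation is a piece of graded-Yoneda bookkeeping. Write $L=L_j$ for the simple concentrated at the object $j$ in degree $0$, and recall that a projective module generated by its degree-$n$ part is precisely a direct sum of copies of the representables $P_i\langle -n\rangle=\mathcal{B}(i,\_)\langle -n\rangle$. I will show
\begin{equation*}
\Hom_{\cgmod}(P_i\langle -n\rangle,\,L_j\langle -m\rangle)\;\cong\;(L_j\langle -m\rangle)(i)_n\;=\;L_j(i)_{n-m},
\end{equation*}
which is nonzero (and then one-dimensional) exactly when $i=j$ and $n=m$. This rests only on the identification $\Hom_{\cgmod}(P_i,M)\cong M(i)_0$ coming from projectivity of $P_i$ together with $P_i$ being generated in degree $0$, and on the description of $L_j$ as the one-dimensional module supported at $j$ in degree $0$. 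The sole fiddly point here is tracking the two shifts correctly: $P_i\langle -n\rangle$ has its generator $\id_i$ sitting in degree $n$, so a degree-preserving map out of it is pinned down by an element of $(L_j\langle -m\rangle)(i)_n$.

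Consequently, whenever $\mathbf{P}^n$ is generated by $(\mathbf{P}^n)_n$ — i.e.\ is a direct sum of $P_i\langle -n\rangle$'s — the term $\Hom_{\cgmod}(\mathbf{P}^n,L\langle -m\rangle)$ vanishes for every $m\neq n$. To conclude, suppose $\Ext^n_{\cgmod}(L,L\langle -m\rangle)\neq 0$. If $n<0$ this is already impossible, so assume $n\ge 0$ and apply the hypothesis with $N=n$, obtaining a resolution whose position-$n$ module is generated in degree $n$. Computing $\Ext^n$ from this resolution, its $n$-th term $\Hom_{\cgmod}(\mathbf{P}^n,L\langle -m\rangle)$ is zero as soon as $m\neq n$, so the cohomology $\Ext^n_{\cgmod}(L,L\langle -m\rangle)$ would vanish — contradicting the assumption. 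Hence $m=n$, which is exactly \eqref{koszdef}, and $\mathcal{B}$ is Koszul.

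Since the argument is entirely local to a single homological position, the possibility that different $N$ yield different resolutions is harmless: only the value $N=n$ is ever invoked. I therefore expect no serious obstacle — the only real care is demanded by the degree-shift conventions in the $\Hom$ computation above, and everything else is formal.
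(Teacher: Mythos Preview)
Your argument is correct and is essentially the same as the paper's: compute $\Ext^n_{\cgmod}(L,L\langle -m\rangle)$ as cohomology of $\Hom_{\cgmod}(\mathbf{P}^\bullet,L\langle -m\rangle)$ and observe that the term at the relevant position already vanishes when $m\neq n$ because $\mathbf{P}^n$ is generated in degree $n$. You spell out two points the paper leaves implicit --- the Yoneda identification $\Hom_{\cgmod}(P_i\langle -n\rangle,L_j\langle -m\rangle)\cong L_j(i)_{n-m}$ and the fact that one may choose $N=n$ and rely on resolution-independence of $\Ext$ --- but these are elaborations of the same idea rather than a different approach.
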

\begin{proof}
Fix $N$ and a corresponding $\mathbf{P}^\bullet$ satisfying the assumptions of the proposition. By definition 
\begin{equation*}
\Ext^n_\cat(L,L\langle -m\rangle)=H^n(\Hom_{\cgmod}(\mathbf{P}^\bullet,L\langle -m\rangle)).
\end{equation*}
 But $\Hom_{\cgmod}(\mathbf{P}^n,L\langle -m\rangle)=0$ unless $m=n$, since $\mathbf{P}^n$ is generated in degree $n$, and the desired result follows. 
\end{proof}
We will later utilize the mapping cone (cf. e.g. \cite[p. 154]{GM03}) of a morphism of complexes $f: \mathbf{X}^\bullet\rightarrow \mathbf{Y}^\bullet$, which we shall denote by $\cone(f)$.

\section{Some useful $\cat$-modules described}

From the following lemma one readily sees that no indecomposable projective module in $\cgmod$ has graded length greater than 2.

\begin{mylem}
\label{shortlemma}
For any $i,j\in\cat$ and $n>2$ we have $\cat(i,j)_n=0$. 
\end{mylem}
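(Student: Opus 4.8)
The plan is to reduce the statement to the single case $n=3$, and then to reduce that to one ``back-and-forth'' configuration along a single edge, which is where the standing hypothesis on $G$ becomes essential.

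First I would note that it suffices to prove $\cat(i,j)_3=0$ for all $i,j$. Indeed, each $\cat(i,j)_n$ is spanned by the images under the quotient by $\mathcal{I}$ of $Q$-paths of length $n$, and any $Q$-path of length $n\ge 3$ is, by concatenation, a composite whose first three arrows form a length-$3$ $Q$-path from $i$ to some object $m$. Once we know $\cat(i,m)_3=0$, the image of that initial segment is already $0$ in $\cat$, so the image of the whole path is a composite with a zero factor and hence vanishes. Thus the full lemma follows from the case $n=3$.

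Next I would fix a length-$3$ path $v_0\xrightarrow{a}v_1\xrightarrow{b}v_2\xrightarrow{c}v_3$ and analyse the composite $cba$; since $Q$ has no loops, consecutive vertices are always distinct. If $v_0\ne v_2$, then $v_0,v_1,v_2$ are pairwise distinct and relation (i) gives $ba=0$, whence $cba=0$; symmetrically, if $v_1\ne v_3$ then $cb=0$ and again $cba=0$. The only configuration not immediately killed by relation (i) is therefore $v_0=v_2$ and $v_1=v_3$, i.e.\ the path runs back and forth along the single edge $\{v_0,v_1\}$.

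The crux is this back-and-forth case, and it is exactly here that the assumption that $G$ contains an infinite directed walk is indispensable: for $G$ equal to a single edge there are \emph{no} relations at all and the statement is false, so the hypothesis must be used to exclude this. The key observation is that the edge $\{v_0,v_1\}$ must have an endpoint of valency at least $2$, for otherwise connectedness of $G$ would force $G$ to be that single edge, which admits no infinite directed walk. So, relabelling if necessary, I may assume there is $v'\in\I_{v_0}$ with $v'\ne v_1$. I would then apply relation (ii) to rewrite the loop $ba:v_0\to v_1\to v_0$ at $v_0$ as a loop $b'a':v_0\to v'\to v_0$ through $v'$; since $\mathcal{I}$ is an ideal this identity survives left composition with $c$, giving $cba=cb'a'=(cb')a'$. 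Now $cb'$ runs $v'\to v_0\to v_1$ through the three distinct vertices $v',v_0,v_1$, so relation (i) yields $cb'=0$ and hence $cba=0$ (and if instead $v_1$ has the extra neighbour, the symmetric rewriting of the loop $cb$ at $v_1$ finishes via relation (i) applied to the segment $v_0\to v_1\to v''$). The main obstacle is precisely recognising that back-and-forth paths cannot be eliminated by relation (i) alone and require relation (ii) together with a third vertex, together with checking that the infinite-walk hypothesis is exactly what guarantees such a vertex exists.
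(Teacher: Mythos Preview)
Your proof is correct and follows essentially the same argument as the paper: reduce to degree $3$, use relation (i) to force the back-and-forth configuration $aba$, then use $|V|>2$ (equivalently, the infinite-walk hypothesis rules out the single-edge graph) to find a third adjacent vertex, rewrite one of the loops via relation (ii), and kill the result with relation (i). The only cosmetic difference is that you spell out the reduction to $n=3$ and the role of the infinite-walk hypothesis a bit more explicitly than the paper does.
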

\begin{proof}
Since $\cat(i,j)$ is generated by degree 1 morphisms, it suffices to show that $\cat(i,j)_3=0$. From the first kind of relations on $\cat$, we see that the only morphisms in $\cat(i,j)_2$ are linear combinations of morphisms of the form $ab$, where $a:i\rightarrow j$ and $b:j\rightarrow i$ are some arrows in $Q$, and consequently any morphism in $\cat(i,j)_3=0$ has to be a linear combination of morphisms of the form $aba$. 

Of course $|\I|>2$, so at least one of $i$ and $j$ is adjacent to more than one vertex. Both $ab$ and $ba$ are loops, so therefore the second kind of relations on $\cat$ gives that either $aba=cda$ or $aba=acd$, for some other loop $cd$ in $Q$. But in either case the first kind of relations again gives that $aba=0$. 
\end{proof}

For the rest of this text we will denote by $i$ some arbitrary element of $\I$, and by $\J$ some arbitrary subset of $\I_i$.

\subsection{A closer look at the indecomposable projective $\cat$-modules}
The following lemma describes the module $P_i$ more explicitly.
\begin{myprop}
\label{projprop}
The module $P_i$ satisfies the following properties.
\begin{enumerate}[$($i$)$]
\item
$\Top(P_i)\cong L_i$.

\item
$\soc(P_i)\cong L_i\langle -2\rangle$. 

\item
$\soc(P_i/\soc(P_i))\cong \bigoplus_{k\in I_i}L_k\langle -1\rangle$.
\end{enumerate}
\end{myprop}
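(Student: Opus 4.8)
The plan is to determine the graded layers of $P_i$ completely and then read off the two socles as joint kernels of the arrow-actions. Write $\alpha_k\colon i\to k$ and $\beta_k\colon k\to i$ for the two arrows of $Q$ joining $i$ to a neighbour $k\in\I_i$. By Lemma~\ref{shortlemma} the module $P_i$ is concentrated in degrees $0,1,2$, and I would pin down each layer as follows. In degree $0$ one has $P_i(j)_0=\cat(i,j)_0$, which is $\Bbbk\id_i$ for $j=i$ and $0$ otherwise. In degree $1$, $P_i(k)_1=\cat(i,k)_1=\Bbbk\alpha_k$ for $k\in\I_i$ and $0$ otherwise. In degree $2$ the first kind of relations kills every length-$2$ path through three distinct vertices, so (as $G$ is simple) the only surviving degree-$2$ morphisms out of $i$ are the loops $\beta_k\alpha_k\colon i\to i$; the second kind of relations then identifies all of these with one another, so that $P_i(i)_2=\Bbbk\ell$ is one-dimensional, where $\ell:=\beta_k\alpha_k$ is independent of $k$, and all other degree-$2$ spaces vanish.

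The core of this step, and the one place where I expect to have to argue with some care, is the claim that $P_i(i)_2$ is exactly one-dimensional and in particular nonzero. I would verify this by noting that in $\cat'(i,i)_2$ the relations meeting this component come only from the second kind, since the first kind always involves three distinct vertices and so never lands in an $(i,i)$-component, and since a degree-$2$ relation cannot be pre- or post-composed with a positive-degree morphism without leaving degree $2$. These relations contribute precisely the differences $\beta_k\alpha_k-\beta_{k'}\alpha_{k'}$, a subspace of codimension one; as no relation of either kind can produce a full loop, we get $\ell\ne 0$.

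Granting this description, parts (i)--(iii) follow by identifying the socle of a graded module with its largest submodule annihilated by $\cat(\_,\_)_{>0}$, equivalently the joint kernel of all arrow-actions. Part (i) is immediate, since $\Top(P_i)=P_i/\cat(i,\_)_{>0}$ is exactly the degree-$0$ layer $\Bbbk\id_i\cong L_i$. For (ii) I would first observe that $\Bbbk\ell$ is a submodule: every arrow raises degree, so it sends $\ell$ into degree $3$, which is zero. Hence $\Bbbk\ell\cong L_i\langle-2\rangle$ lies in the socle. Conversely, the socle is annihilated by all arrows, while $\alpha_k\cdot\id_i=\alpha_k\ne 0$ rules out the degree-$0$ layer and $\beta_k\cdot\alpha_k=\ell\ne 0$ rules out the degree-$1$ layer; thus $\soc(P_i)=\Bbbk\ell\cong L_i\langle-2\rangle$.

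Finally, for (iii) I would pass to $\overline{P}:=P_i/\soc(P_i)=P_i/\Bbbk\ell$, which retains the degree-$0$ and degree-$1$ layers but has trivial degree-$2$ part. Each class $\overline{\alpha_k}$ is now annihilated by every arrow out of $k$: the arrow $\beta_k\colon k\to i$ sends it to $\overline{\ell}=0$, while any other arrow $k\to k'$ produces a length-$2$ path through the three distinct vertices $i,k,k'$ and so vanishes by the first kind of relations. Hence each $\overline{\alpha_k}$ spans a copy of $L_k\langle-1\rangle$ inside the socle. Since $\alpha_k\cdot\id_i=\overline{\alpha_k}\ne 0$, the degree-$0$ layer again contributes nothing, and as the classes $\overline{\alpha_k}$ are supported at distinct vertices we conclude $\soc(\overline{P})\cong\bigoplus_{k\in\I_i}L_k\langle-1\rangle$.
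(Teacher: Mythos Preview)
Your proof is correct and follows essentially the same approach as the paper's: both use Lemma~\ref{shortlemma} to bound the grading, the first kind of relations to see that $(P_i)_2$ is concentrated at $i$, the second kind to see it is one-dimensional, and then identify the socles as the joint kernel of the positive-degree morphisms. Your version is considerably more explicit---in particular, you carefully verify that $\ell\ne 0$ by checking that the only relations landing in $\cat'(i,i)_2$ are the differences $\beta_k\alpha_k-\beta_{k'}\alpha_{k'}$, a point the paper passes over tacitly---but the underlying strategy is the same.
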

\begin{proof}
The first claim holds by the definition of $L_i$. 

By Lemma \ref{shortlemma}, we have $(P_i)_{\ge 3}= 0$. From the first kind of relations on $\cat$ we get that $(P_i)_2\subset \cat(i,i)$. Since it must be annihilated by morphisms of positive degree, $(P_i)_2$ is in fact annihilated by all of $\cat$ except for $\Bbbk\id_i$, and hence must be a sum of copies of $L_i\langle -2\rangle$. From the second kind of relations it follows that there is only one summand in $(P_i)_2$, hence $\soc(P_i)\cong L_i\langle -2\rangle$. 

From the second kind of relations now also follows that the socle has no summand in degree 1, since otherwise all degree 2 morphisms of $\cat$ would kill $P_i$. Finally, $\soc(P_i/\soc(P_i))\cong \cat(i,\_)_1/\cat(i,\_)_2\cong \bigoplus_{k\in I_i}L_k\langle -1\rangle$. 
\end{proof}
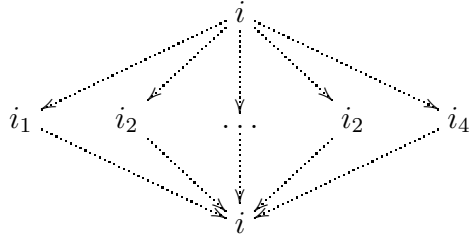
\begin{figure}
\[
\xymatrix{
&&i\ar@{.>}[dll]\ar@{.>}[dl]\ar@{.>}[d]\ar@{.>}[dr]\ar@{.>}[drr]
&&\\
i_1\ar@{.>}[drr]&i_2\ar@{.>}[dr]&\dots\ar@{.>}[d]&i_2\ar@{.>}[dl]&i_4\ar@{.>}[dll]\\
&&i&&
}
\]
\caption{Illustration of the module $P_i$ where we have $\{i_1,i_2,i_3,i_4,\dots\}= \I_i$. The layers of indices represent the layers of corresponding simple modules in the radical filtration of $P_i$. Thus if the index $j$ lies in the layer above an index $k$, the module $P_i$ has a subquotient whose simple subquotients are degree shifts of $L_j$ and $L_k$. If there is an arrow from $j$ to $k$, then the subquotient has (up to degree shifts) top $L_j$ and socle $L_k$, whereas otherwise it is semisimple. }\label{projfig}
\end{figure}
\begin{myrem}
{
\rm
We observe in passing that $\cat\cong\cat^\text{op}$, and that this self-duality preserves the grading of $\cat$. It follows that the self-duality of $\cgmod$ induced by $\Hom_{\Bbbk}(\_,\Bbbk)$ takes $P_i$ to itself, so that in fact $P_i=I_i$, where $I_i$ is the injective envelope of $L_i$.
}
\end{myrem}

\subsection{The modules $M(\J,i)$}

From the simple and projective modules we obtain another family of modules that will be important to us.
\begin{enumerate}[$($i$)$]
\item
By $M(\I_i,i)$ we denote the kernel of the natural surjection 
\begin{equation*}
P_i\langle 1\rangle\twoheadrightarrow L_i\langle 1\rangle.
\end{equation*}

\item
More generally, we denote by $M(\J,i)$ the submodule of $M(\I_i,i)$ whose top has as summands precisely those $L_k$ for which $k\in \J$ in case $\J$ is non-empty, and the module $L_i\langle -1\rangle$ otherwise. 
\end{enumerate}

\begin{mycor}
\label{mcor}
The module $M(\J,i)$ is isomorphic to any module $M\in\cgmod$ which satisfies the following properties. 
\begin{enumerate}[$($i$)$]
\item
$\Top(M)\cong \bigoplus_{k\in\J}L_k$.

\item
$\soc(M)\cong L_i\langle{-1}\rangle$. 
\end{enumerate}
\end{mycor}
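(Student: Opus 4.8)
The plan is to show that hypotheses (i) and (ii) determine $M$ up to isomorphism, and then to check that $M(\J,i)$ is one module having these properties. I treat the case $\J\ne\emptyset$; when $\J=\emptyset$, hypothesis (ii) together with simplicity forces $M\cong L_i\langle-1\rangle=M(\emptyset,i)$ directly.

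First I would determine the underlying graded vector space of $M$. Since $\Top(M)\cong\bigoplus_{k\in\J}L_k$ is concentrated in degree $0$, the module $M$ is generated in degree $0$, so each $M_n$ is spanned by degree-$n$ morphisms applied to $M_0$; by Lemma \ref{shortlemma} this gives $M_n=0$ for $n>2$. In fact $M_2=0$: every element of $M_2$ is killed by all positive-degree morphisms (which would land in $M_{\ge3}=0$) and hence lies in $\soc(M)$, but $\soc(M)\cong L_i\langle-1\rangle$ sits in degree $1$. The same reasoning shows $M_1\subseteq\soc(M)$, while $\soc(M)\subseteq M_1$ since $\soc(M)$ is concentrated in degree $1$; thus $M_1=\soc(M)$ is one-dimensional, spanned by a vector $v$ supported at the object $i$. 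Because $\rad(M)$ has no degree-$0$ part, $M_0\cong\Top(M)_0$ has a basis consisting of one generator $u_k$ at each object $k\in\J$.

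Next I would pin down the module structure. The only morphisms that can act nontrivially are those of degree $1$: both kinds of relations defining $\cat$ are of degree $2$ and thus, like all degree-$2$ morphisms, send $M_0$ into $M_2=0$, so the relations hold automatically on $M$ and impose no constraints. Since $M_1$ is supported only at $i$, the single arrow $a_k\colon k\to i$ is the only degree-$1$ morphism acting nontrivially on $u_k$, and the whole structure is encoded by scalars $\lambda_k\in\Bbbk$ with $a_k u_k=\lambda_k v$. Hypothesis (ii) forces each $\lambda_k\ne0$, for otherwise $u_k$ itself would lie in $\soc(M)$, enlarging the socle past $L_i\langle-1\rangle$. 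Rescaling $u_k\mapsto\lambda_k^{-1}u_k$ normalizes all $\lambda_k=1$, so up to isomorphism there is a unique module satisfying (i) and (ii). It remains to note that $M(\J,i)$ is such a module: its top is $\bigoplus_{k\in\J}L_k$ by construction, and its socle is a nonzero submodule of $\soc(M(\I_i,i))\cong L_i\langle-1\rangle$, hence equals it. Comparison with the normalized model gives $M\cong M(\J,i)$.

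The step I expect to require the most care is the reduction to degrees $0$ and $1$ together with the observation that no relation of $\cat$ constrains the degree-$1$ action. This is exactly where Lemma \ref{shortlemma} and the degree-$2$ nature of both kinds of relations enter, and it is what makes a single nonzero scalar per generator the complete isomorphism invariant; the remaining normalization of those scalars is routine.
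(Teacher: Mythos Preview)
Your argument is correct and follows essentially the same approach as the paper: both verify that $M(\J,i)$ satisfies (i) and (ii) and then build an isomorphism by choosing generators $u_k$ and a socle vector $v$ with $a_k u_k=v$, which is exactly the paper's choice of $v_k\mapsto w_k$ and $v\mapsto w$ with $a_k v_k=v$, $a_k w_k=w$. Your write-up is considerably more detailed than the paper's (in particular you make explicit why $M$ is concentrated in degrees $0$ and $1$ and why the relations impose no constraints there); the only place to tighten is the sentence ``$\Top(M)$ concentrated in degree $0$ $\Rightarrow$ $M$ generated in degree $0$,'' which tacitly uses $M_{<0}=0$---but that follows immediately from hypothesis (ii) together with Lemma~\ref{shortlemma}, since any nonzero $x\in M_n$ with $n<0$ would generate a submodule supported in degrees $\le n+2\le 1$ whose top degree would contribute to $\soc(M)$.
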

\begin{proof}
From Proposition \ref{projprop} and the subsequent definition of $M(\J,i)$ follows that $M(\J,i)$ satisfies the given properties. 

The maps of basis elements 
\begin{equation*}
\soc(M(\J,i))\cong L_i\langle-1\rangle\ni v\mapsto w\in L_i\langle-1\rangle\cong \soc(M)
\end{equation*}
and 
\begin{equation*}
\Top(M(\J,i))\supset L_k\ni v_k\mapsto w_k\in L_k\subset \Top(M)
\end{equation*}
chosen such that $a_kv_k=v$ and $a_kw_k=w$ for basis elements $a_k\in\cat(k,i)$ extends by linearity to an isomorphism. 
\end{proof}

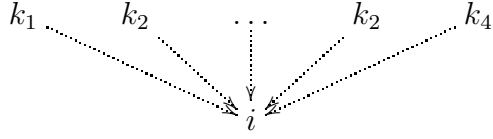
\begin{figure}
\[
\xymatrix{
k_1\ar@{.>}[drr]&k_2\ar@{.>}[dr]&\dots\ar@{.>}[d]&k_2\ar@{.>}[dl]&k_4\ar@{.>}[dll]\\
&&i&&
}
\]
\caption{Illustration of the module $M(\J,i)$ for a non-empty vertex set $\J=\{k_1,k_2,k_3,k_4,\dots\}$.}\label{modfig}
\end{figure}

\section{$\cat$ is Koszul}\label{s3}

Our main result reads as follows.

\begin{mythm}
\label{koszthm}
The category $\mathcal{C}$ is Koszul. 
\end{mythm}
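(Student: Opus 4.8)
The plan is to invoke Proposition \ref{linres}. Inspecting its proof, the vanishing $\Ext^n_\cat(L_i,L\langle -m\rangle)=0$ for $m\neq n$ follows as soon as the term in position $n$ of a single projective resolution of $L_i$ is generated in degree $n$, since then $\Hom_{\cgmod}(\mathbf{P}^n,L\langle -m\rangle)=0$ for $m\neq n$. Hence it suffices to produce, for every object $i$ and every $N$, a graded projective resolution of $L_i$ whose term in position $n$ is generated in degree $n$ for all $n\le N$. I will construct such resolutions, simultaneously with analogously linear resolutions of the modules $M(\J,i)$ (normalised so that the term in position $n$ is generated in degree $n$, matching $\Top M(\J,i)$ in degree $0$), by a recursion on $N$.

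The recursion rests on three families of short exact sequences. First, the kernel of $P_i\twoheadrightarrow L_i$ is $\rad P_i$, which by Proposition \ref{projprop} and the definition of $M(\I_i,i)$ equals $M(\I_i,i)\langle -1\rangle$, giving
\[
0\to M(\I_i,i)\langle -1\rangle\to P_i\to L_i\to 0 .
\]
Second, by Lemma \ref{shortlemma} the projective cover $\bigoplus_{k\in\J}P_k\twoheadrightarrow M(\J,i)$ has a kernel $\Omega$ concentrated in degrees $1$ and $2$, giving
\[
0\to\Omega\to\bigoplus_{k\in\J}P_k\to M(\J,i)\to 0 ,
\]
with $\Omega$ generated in degree $1$. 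Third, and this is the heart of the matter, I claim $\Omega$ is an extension
\[
0\to\bigoplus_{k\in\J}M(\I_k\setminus\{i\},k)\langle -1\rangle\to\Omega\to S\to 0 ,
\]
where $S$ is a direct sum of copies of $L_i\langle -1\rangle$. Here the submodule is the sum of the parts of $\Omega$ lying inside the individual $P_k$, each identified through Corollary \ref{mcor}, while $S$ is spanned by the images of the degree-$1$ elements $[k\to i]-[k_0\to i]$ for a fixed $k_0\in\J$: the relations of the second kind (all length-$2$ loops at $i$ coincide) force these to lie in $\Omega$, and the relations of the first kind together with Lemma \ref{shortlemma} force their products with positive-degree morphisms into the submodule, so that the quotient $S$ is semisimple.

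Given these sequences, the recursive step runs as follows. The base case $N=0$ is the projective cover, generated in degree $0$. For the step, the first and second sequences have projective middle term, so splicing in (equivalently, forming the mapping cone over $P_i$, respectively over $\bigoplus_{k\in\J}P_k$) reduces a resolution of $L_i$, respectively of $M(\J,i)$, linear up to position $N$ to a resolution of $M(\I_i,i)$, respectively of $\Omega$, linear up to position $N-1$; the shift $\langle -1\rangle$ exactly compensates the drop in position and preserves linearity. The third sequence is an extension of two modules whose tops both sit in degree $1$, so the horseshoe lemma yields a resolution of $\Omega$ whose position-$n$ term is the direct sum of the position-$n$ terms of linear resolutions of $\bigoplus_{k\in\J}M(\I_k\setminus\{i\},k)\langle -1\rangle$ and of $S$, hence is again generated in a single degree; no cancellation is needed precisely because the two tops agree in degree. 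Since $M(\I_k\setminus\{i\},k)$ and $L_i$ (whence $S$) only need to be resolved up to position $N-1$, the recursion on $N$ is well founded, even though the underlying dependency — following walks in the infinite graph $G$ — never terminates.

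The main obstacle is establishing the third sequence: one must compute $\ker\bigl(\bigoplus_{k\in\J}P_k\to M(\J,i)\bigr)$ explicitly and verify that the linking difference generators contribute only the semisimple quotient $S$. This is exactly where the relations of both kinds and the length bound of Lemma \ref{shortlemma} are used essentially, and where the hypothesis that $G$ carries an infinite directed walk enters, through the socle computation of Proposition \ref{projprop}. A secondary difficulty is that $\J$ and each $\I_k$ may be infinite, so the projective covers are infinite coproducts and $S$ may have infinite rank; I will need to check that such coproducts remain projective and that the horseshoe and mapping-cone constructions behave well in $\cgmod$ — the precise point at which the present setting falls just outside that of \cite{MOS09} and must be handled directly.
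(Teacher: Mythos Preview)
Your outline has a genuine gap in the third short exact sequence, and it is precisely the place where the infinite-walk hypothesis does its real work. You assert that in
\[
0\to\bigoplus_{k\in\J}M(\I_k\setminus\{i\},k)\langle -1\rangle\to\Omega\to S\to 0
\]
both outer terms have top concentrated in degree~$1$. But if some $k\in\J$ is a leaf of $G$ (that is, $\I_k=\{i\}$), then $M(\I_k\setminus\{i\},k)\langle -1\rangle=M(\varnothing,k)\langle -1\rangle=L_k\langle -2\rangle$, which is generated in degree~$2$. The horseshoe resolution of $\Omega$ then acquires a summand $P_k\langle -2\rangle$ in position~$0$, so it is not generated in a single degree there, and the induction on $N$ collapses at the very first step. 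Note that $\Omega$ itself \emph{is} generated in degree~$1$ even in the presence of leaves (the socle $L_k\langle -2\rangle$ is reached from the linking element $[k\to i]-[k_0\to i]$ via the arrow $i\to k$), so the problem is not with $\Omega$ but with your chosen filtration of it: the submodule you split off is not generated in degree~$1$, and the resulting horseshoe resolution is non-minimal in a way that destroys linearity. This is exactly the phenomenon of the non-example in Section~4.3.

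The paper avoids this by \emph{not} treating all $k\in\J$ symmetrically. It selects one $j\in\J$ that lies on an infinite directed walk out of $i$; Lemma~\ref{noleaf} guarantees $\I_j\setminus\{i\}\neq\varnothing$, so $M(\I_j\setminus\{i\},j)$ is honestly generated in degree~$0$. That single summand is handled by the sequence $M(\I_j\setminus\{i\},j)\langle -1\rangle\hookrightarrow P_j\twoheadrightarrow M(\{j\},i)$, while for every other $k\in\J\setminus\{j\}$ one uses the \emph{full} $M(\I_k,k)$ (which always has $i\in\I_k$, hence is nonempty) rather than $M(\I_k\setminus\{i\},k)$. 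The third short exact sequence of Lemma~\ref{sesdef} then has middle term $(\bigoplus_{k\neq j}P_k)\oplus M(\{j\},i)$ and left term $\bigoplus_{k\neq j}M(\I_k,k)\langle -1\rangle$, with no risk of an empty index set. Your remark that the infinite-walk hypothesis enters ``through the socle computation of Proposition~\ref{projprop}'' is therefore off the mark: Proposition~\ref{projprop} needs only $|\I|>2$, and the essential use of the hypothesis is Lemma~\ref{noleaf}, which keeps the recursion away from the degenerate case $\J=\varnothing$.
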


\subsection{Three families of short exact sequences}
The recursive construction of increasingly linear resolutions will be based on three short exact sequences, the injections of which we here define. 

Assume in this subsection that $\J$ contains a vertex $j$ such that there is an infinite directed walk in $G$ starting with the directed edge from $i$ to $j$. 

Let the maps 

\begin{equation}\label{alphadef}
\alpha_i:M(\I_i,i)\langle -1\rangle \hookrightarrow P_i
\end{equation}
and

\begin{equation}\label{betadef}
\beta_{i,j}:M(\I_{j}\backslash\{i\},j)\langle -1\rangle\hookrightarrow P_{j}
\end{equation}

 be (the degree shifted versions of) the natural inclusions which define $M(\I_i,i)$ and $M(\I_{j}\backslash\{i\},j)$. 

To define the third family of maps, we first define the following auxiliary maps. For $k\in \J$, let 
\begin{equation}\label{aux1}
\iota_k:L_i\langle -1\rangle\hookrightarrow M(\{k\},i)
\end{equation}
be the natural inclusion, and let 

\begin{equation}\label{aux2}
\varphi_k:M(\I_k,k)\langle -1\rangle\twoheadrightarrow L_i\langle -1\rangle
\end{equation}
be the quotient map. 

Now define
\begin{equation}\label{gammadef1}
\gamma_{\J,j,i}: \bigoplus_{k\in \J\backslash\{j\}}M(\I_k,k)\langle -1\rangle\rightarrow (\bigoplus_{k\in \J\backslash\{j\}} P_k)\oplus M(\{j\},i)
\end{equation}
to be given by the $\J\times \J\backslash\{j\}$-matrix 
\begin{equation}\label{gammadef2}
(\gamma_{\J,j,i})_{r,c}=\begin{cases}
\alpha_{r}, &\text{if $r=c\in \J\backslash\{j\}$};\\
-\iota_{j}\circ\varphi_c, &\text{if $c\in \J\backslash\{j\}$ and $r=j$};\\
0, &\text{otherwise.}
\end{cases}
\end{equation}
\begin{mylem}
\label{sesdef}
We have the following short exact sequences of modules.

\begin{enumerate}[$($i$)$]
\item
\begin{equation*}
M(\I_i,i)\langle -1\rangle \overset{\alpha_i}\hookrightarrow P_i\twoheadrightarrow L_i,
\end{equation*}

\item
\begin{equation*}
M(\I_{j}\backslash\{i\},j)\langle -1\rangle\overset{\beta_{i,j}}\hookrightarrow P_{j}\twoheadrightarrow M(\{j\},i),
\end{equation*}

\item
\begin{equation*}
\bigoplus_{k\in \J\backslash\{j\}}M(\I_k,k)\langle -1\rangle \overset{\gamma_{\J,j,i}}\hookrightarrow (\bigoplus_{k\in \J\backslash\{j\}} P_k)\oplus M(\{j\},i)\overset{\delta_{\J,j,i}}\twoheadrightarrow M(\J,i).
\end{equation*}

\end{enumerate}

\end{mylem}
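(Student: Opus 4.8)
The plan is to treat the three sequences in turn, in each case identifying the cokernel of the stated injection by computing its top and socle and then appealing to Corollary \ref{mcor}. Sequence (i) is immediate: by definition $M(\I_i,i)$ is the kernel of $P_i\langle 1\rangle\twoheadrightarrow L_i\langle 1\rangle$, so applying $\langle -1\rangle$ gives precisely (i), with cokernel $L_i$. In (ii) the map $\beta_{i,j}$ is by construction the inclusion of the submodule $M(\I_j\backslash\{i\},j)\langle -1\rangle$ of $P_j$, so only the cokernel must be identified. Since that submodule is concentrated in positive degree it lies in $\rad(P_j)$, whence the cokernel has top $L_j$; its radical is $M(\I_j,j)\langle -1\rangle/M(\I_j\backslash\{i\},j)\langle -1\rangle$, which by Proposition \ref{projprop} is the single summand $L_i\langle -1\rangle$ that was removed from the top, the common socle $L_j\langle -2\rangle$ cancelling. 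Thus the cokernel has top $L_j$ and socle $L_i\langle -1\rangle$, and is $M(\{j\},i)$ by Corollary \ref{mcor}. This argument uses nothing about $j$ beyond $j\in\I_i$, so applied to any $k\in\I_i$ it yields a surjection $P_k\twoheadrightarrow M(\{k\},i)$ with kernel $M(\I_k\backslash\{i\},k)\langle -1\rangle$, which I reuse below.

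The content of the lemma is (iii). Injectivity of $\gamma_{\J,j,i}$ is automatic from its diagonal: if $\gamma_{\J,j,i}$ kills a tuple $(x_k)$ then each component $\alpha_k(x_k)=0$, and since the $\alpha_k$ are injective all $x_k=0$. For the cokernel map I take $\delta_{\J,j,i}$ to be the evident surjection onto $M(\J,i)$ which on each summand $P_k$ ($k\in\J\backslash\{j\}$) is the composite of the surjection $P_k\twoheadrightarrow M(\{k\},i)$ above with the inclusion $M(\{k\},i)\hookrightarrow M(\J,i)$, and on $M(\{j\},i)$ is the analogous inclusion; all these submodules of $M(\J,i)$ share its socle $L_i\langle -1\rangle$. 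Surjectivity holds because the images of the tops $L_k$, $k\in\J$, generate $M(\J,i)$.

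To check that the sequence is a complex, note that under $P_k\twoheadrightarrow M(\{k\},i)$ the radical $\rad(P_k)=M(\I_k,k)\langle -1\rangle$ maps onto $\soc(M(\{k\},i))=L_i\langle -1\rangle$, and by (ii) this restricted map is exactly $\varphi_k$; hence the diagonal entry of $\gamma_{\J,j,i}$ contributes $\varphi_k(x_k)$ to the common socle, while the off-diagonal entry $-\iota_j\circ\varphi_k$ contributes $-\varphi_k(x_k)$ through $M(\{j\},i)$, and the two cancel. I then identify the cokernel $C$ with $M(\J,i)$ through Corollary \ref{mcor}. As $\im\gamma_{\J,j,i}$ lies in the radical of the middle term, $\Top(C)=\bigoplus_{k\in\J}L_k$. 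For the radical, identifying $\rad(P_k)$ with $M(\I_k,k)\langle -1\rangle$ via $\alpha_k$ and $\soc(M(\{j\},i))$ with $L_i\langle -1\rangle$ via $\iota_j$, the surjection $((x_k);y)\mapsto y+\sum_k\varphi_k(x_k)$ of the radical of the middle term onto $L_i\langle -1\rangle$ has kernel exactly $\im\gamma_{\J,j,i}$, so $\rad(C)\cong L_i\langle -1\rangle$; being simple, this also equals $\soc(C)$. Hence $C$ has top $\bigoplus_{k\in\J}L_k$ and socle $L_i\langle -1\rangle$, and Corollary \ref{mcor} gives $C\cong M(\J,i)$.

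I expect the bookkeeping in (iii) to be the main obstacle. The crucial point is that the diagonal maps $\alpha_k$ send the socles $L_k\langle -2\rangle$ of the sources isomorphically onto the socles of the $P_k$, so these are annihilated in $C$, while the coupling terms $-\iota_j\circ\varphi_k$ amalgamate the various $L_i\langle -1\rangle$ quotients of the sources together with $\soc(M(\{j\},i))$ into a single copy of $L_i\langle -1\rangle$. Getting the signs right, and matching the restriction of $P_k\twoheadrightarrow M(\{k\},i)$ to the radical with $\varphi_k$, is what forces $\delta_{\J,j,i}\circ\gamma_{\J,j,i}=0$ and pins the socle of $C$ down to $L_i\langle -1\rangle$ rather than something larger.
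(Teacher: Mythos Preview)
Your proof is correct and follows essentially the same route as the paper's: for each sequence you identify the cokernel of the given injection by computing its top and socle and then invoking Corollary~\ref{mcor}. The only differences are organizational---you verify injectivity of $\gamma_{\J,j,i}$ explicitly via the diagonal entries and compute $\rad(C)$ by exhibiting the surjection $((x_k);y)\mapsto y+\sum_k\varphi_k(x_k)$, whereas the paper defines $\delta_{\J,j,i}$ as the cokernel projection, factors it through $\bigoplus_{k\in\J}M(\{k\},i)$, and then argues that the off-diagonal entries of $\gamma_{\J,j,i}$ identify the socles of these summands into a single $L_i\langle -1\rangle$.
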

\begin{proof}
The existence of the first exact sequence follows from the definition of $M(\I_i,i)$. This proves claim (i). 

By Proposition \ref{projprop} we get that $P_j/\im(\beta_{i,j})$ has top $L_j$ and socle $L_i$, hence by Corollary \ref{mcor} is isomorphic to $M(\{j\},i)$. This proves claim (ii). 

Now consider the quotient map 
\begin{equation*}
\delta_{\J,j,i}: (\bigoplus_{k\in \J\backslash\{j\}} P_k)\oplus M(\{j\},i)\twoheadrightarrow (\bigoplus_{k\in \J\backslash\{j\}} P_k)\oplus M(\{j\},i)/\im(\gamma_{\J,j,i}). 
\end{equation*}
Because each $\varphi_c$ annihilates every $M(\I_k\backslash\{i\},k)\subset M(\I_k,k)$, we see from the definition of $\gamma_{\J,j,i}$ that 
\begin{equation*}
(\bigoplus_{k\in \J\backslash\{j\}} M(\I_k\backslash\{i\},k)\langle -1\rangle)\oplus 0\subset \im(\gamma_{\J,j,i}),
\end{equation*}
so that $\delta_{\J,j,i}$ factors through 
\begin{equation*}
\bigoplus_{k\in \J\backslash\{j\}} P_k\oplus M(\{j\},i)/(\bigoplus_{k\in \J\backslash\{j\}} M(\I_k\backslash\{i\},k)\langle -1\rangle\oplus 0)\cong \bigoplus_{k\in \J}M(\{k\},i),
\end{equation*}
also using Corollary \ref{mcor} as in the case of the second short exact sequence. 

Because $(\im(\gamma_{\J,j,i}))_0=0$ it is clear that $\Top(\im(\delta_{\J,j,i}))\cong\bigoplus_{k\in\J}L_k$. Finally $\delta_{\J,j,i}$ enforces the relations of the form $v_k=v_{j}$, where each $v_k$ is some basis element of $\iota_{k}(L_i\langle-1\rangle)\subset \bigoplus_{k\in \J}M(\{k\},i)$, so that the summands of $\soc(\bigoplus_{k\in \J}M(\{k\},i))$ are identified. Then $\soc(\im(\delta_{\J,j,i}))\cong L_i$, so that again by Corollary \ref{mcor} we get the sought $\im(\delta_{\J,j,i})\cong M(\J,i)$. This proves claim (iii). 
\end{proof}

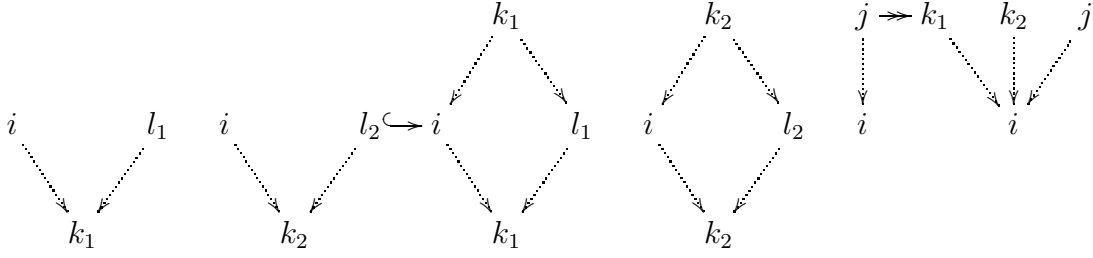
\begin{figure}
\[
\xymatrix@C=1em{
&&&&&&&k_1\ar@{.>}[dl]\ar@{.>}[dr]&&&k_2\ar@{.>}[dl]\ar@{.>}[dr]&&j\ar@{.>}[d]\ar@{->>}[r]&k_1\ar@{.>}[dr]&k_2\ar@{.>}[d]&j\ar@{.>}[dl]\\
i\ar@{.>}[dr]&&l_1\ar@{.>}[dl]&i\ar@{.>}[dr]&&l_2\ar@{.>}[dl]\ar@{^{(}->}[r]&i\ar@{.>}[dr]&&l_1\ar@{.>}[dl]&i\ar@{.>}[dr]&&l_2\ar@{.>}[dl]&i&&i&\\
&k_1&&&k_2&&&k_1&&&k_2&&&&&
}
\]
\caption{Illustration of the third short exact sequence for the case where $\J=\{k_1,k_2,j\}$ and $|\I_{k_1}|=|\I_{k_2}|=2$.}\label{ses3fig}
\end{figure}

For the recursive construction to come we want that every summand in the domains and codomains of the $\alpha_i$, $\beta_{i,j}$ and $\gamma_{\J,j,i}$ are either projective or, up to a degree shift, of the same form as the rightmost modules of the short exact sequences from Lemma \ref{sesdef}, i.e. of the form $L_i$, $M(\{j\},i)$ or $M(\J,i)$, for some choices of $i,\J$ and $j$, possibly different from those before. 

Obviously each $M(\I_i,i)$ is of the form $M(\J,i)$ with some vertex fulfilling the role of $j$ in $\J$. That also $M(\I_{j}\backslash\{i\},j)$ is of this form is a consequence of the following trivial lemma. 

\begin{mylem}\label{noleaf}
There is some $k\in \I_{j}\backslash\{i\}$ such that there is an infinite directed walk in $G$ starting with the directed edge from $j$ to $k$. 
\end{mylem}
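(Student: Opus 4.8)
The plan is to read the required vertex $k$ directly off the walk provided by the standing hypothesis of this subsection, so that the lemma becomes essentially a bookkeeping statement about directed walks. By assumption there is an infinite directed walk in $G$ whose first edge is the oriented edge from $i$ to $j$; with respect to the orientation of $G$ witnessing this walk, write it as a sequence of oriented edges $e_1,e_2,\dots$, where $e_1$ has source $i$ and target $j$. Let $k$ denote the target of $e_2$, so that $e_2$ is the oriented edge from $j$ to $k$.

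Next I would check that $k\in\I_{j}\backslash\{i\}$. Since $k$ is the target of an edge whose source is $j$, certainly $k\in\I_{j}$, and $k\ne j$ because $G$ is simple and hence has no loops. The only point requiring an argument is that $k\ne i$, and this is where simplicity of $G$ is used: there is a unique edge joining $i$ and $j$, and in the given orientation it is directed from $i$ to $j$ (as recorded by $e_1$), so there is no oriented edge from $j$ to $i$. Were $k=i$, the edge $e_2$ would be precisely such an oriented edge from $j$ to $i$, a contradiction. Hence $k\ne i$, and therefore $k\in\I_{j}\backslash\{i\}$.

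Finally, the tail $e_2,e_3,\dots$ is itself an infinite directed walk, and its first edge points from $j$ to $k$; this is exactly an infinite directed walk in $G$ starting with the directed edge from $j$ to $k$, which together with $k\in\I_{j}\backslash\{i\}$ gives the claim. I expect the only genuine subtlety—and the sole reason simplicity of $G$ is invoked—to be the exclusion of the immediate backtrack $i\to j\to i$; every other assertion follows at once from the definition of a directed walk, which is why the lemma can fairly be described as trivial.
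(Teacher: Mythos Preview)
Your argument is correct and follows exactly the approach in the paper: take $k$ to be the target of the second edge of the given infinite directed walk beginning $i\to j$, and use the tail of the walk. The paper's proof is a single sentence that omits the verification $k\ne i$; your observation that simplicity of $G$ forces the unique edge between $i$ and $j$ to carry only the orientation $i\to j$, ruling out the backtrack $j\to i$, is precisely the detail being suppressed there.
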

\begin{proof}
Take $k$ to be the second vertex along an infinite directed walk in $G$ starting with the directed edge from $i$ to $j$.
\end{proof}

\subsection{Recursive construction of resolutions}
We may now recursively define for all choices of $i$, $\J$ and $j$ sequences $\mathbf{A}_n(i)^\bullet$, $\mathbf{B}_n(i,j)^\bullet$ and $\mathbf{C}_n(\J,i)^\bullet$, indexed by $n\in\Z_{\ge 0}$, of projective resolutions as follows (these are well-defined by Lemma \ref{noleaf} and the discussion preceding it). 
\begin{enumerate}[$($i$)$]
\item
\begin{enumerate}[$($a$)$]
\item
Let $\mathbf{A}_0(i)^\bullet$ be some projective resolution of $L_i$.

\item
Let $\mathbf{B}_0(i,j)^\bullet$ be some projective resolution of $M(\{j\},i)$.

\item
Let $\mathbf{C}_0(\J,i)^\bullet$ be some projective resolution of $M(\J,i)$. 

\end{enumerate}
We will only assume that each of these resolutions is generated in degree $0$ at position $0$, which is possible since $L_i$, $M(\{j\},i)$ and $M(\J,i)$ are generated in degree $0$.

\item
Let $n>0$, and let

\begin{equation*}
\alpha_i^n:\mathbf{C}_{n-1}(\I_i,i)^\bullet\langle -1\rangle\rightarrow \mathbf{P}_i^\bullet
\end{equation*}

define some morphism of complexes $\alpha_i^\bullet$ induced by $\alpha_i$. Set
\begin{equation*}
\mathbf{A}_n(i)^\bullet=\cone(\alpha_i^\bullet).
\end{equation*}

Similarly, let 
\begin{equation*}
\beta_{i,j}^n:\mathbf{C}_{n-1}(\I_{j}\backslash\{i\},j)^\bullet\langle -1\rangle\rightarrow \mathbf{P}_{j}^\bullet
\end{equation*}
define some morphism of complexes $\beta_{i,j}^\bullet$ induced by $\beta_{i,j}$. Set 
\begin{equation*}
\mathbf{B}_n(i,j)^\bullet=\cone(\beta_{i,j}^\bullet).
\end{equation*}

Finally let
\begin{equation*}
\gamma_{\J,j,i}^n:\bigoplus_{k\in \J\backslash\{j\}}\mathbf{C}_{n-1}(\I_k,k)^\bullet \langle -1\rangle\rightarrow(\bigoplus_{k\in \J\backslash\{j\}}\mathbf{P}_k^\bullet) \oplus \mathbf{B}_n(i,j)^\bullet 
\end{equation*}
define some morphism of complexes $\gamma_{\J,j,i}^n$ induced by $\gamma_{\J,j,i}$. Set
\begin{equation*}
\mathbf{C}_n(\J,i)^\bullet=\cone(\gamma_{\J,j,i}^\bullet)
\end{equation*}
(this does not depend on $j\in \J$). 
\end{enumerate}

We may now prove Theorem \ref{koszthm}. 
\begin{proof}
First note that each $\mathbf{A}_n(i)^\bullet$, $\mathbf{B}_n(i,j)^\bullet$ and $\mathbf{C}_n(\J,i)^\bullet$ is a projective resolution of $L_i$, $M(\{j\},i)$ and $M(\J,i)$, respectively, because of the short exact sequences of Lemma \ref{sesdef}. 

Thanks to Proposition \ref{linres}, it suffices to show that, for all $n\in\Z_{\ge 0}$, the above resolutions are linear for positions at least $-n$, i. e. that for $m\le n$ each resolution is generated in degree $m$ at position $-m$. This we show by induction on $n$. 

The base case is clear by definition of $\mathbf{A}_0(i)^\bullet$, $\mathbf{B}_0(i,j)^\bullet$ and $\mathbf{C}_0(\J,i)^\bullet$. 

As for the induction step, assume that the statement is true for some $n\in\Z_{\ge 0}$. Then for $m\le n+1$ we have that $\mathbf{A}_{n+1}(i)^{-m}=\mathbf{C}_{n}(\I_i,i)^{-(m-1)}\langle -1\rangle\oplus \mathbf{P}_i^{-m}$ is generated in degree $m$, because by the induction assumption $\mathbf{C}_{n}(\I_i,i)^{-(m-1)}\langle -1\rangle$ is generated in degree $m-1-(-1)=m$. Thus the statement is true for $n+1$ as well. 

The case of the sequence of $\mathbf{B}_n(i,j)^\bullet$ is entirely analogous, and using this case and the fact that 
\begin{equation*}
\mathbf{C}_{n+1}(\J,i)^{-m}=\bigoplus_{k\in \J\backslash\{j\}}\mathbf{C}_{n}(\I_k,k)^{-(m-1)} \langle -1\rangle\oplus (\bigoplus_{k\in \J\backslash\{j\}}\mathbf{P}_k^{-m}) \oplus \mathbf{B}_{n+1}(i,j)^{-m} 
\end{equation*}
the last case is in the same way taken care of.

\end{proof}

\begin{myrem}
{\rm
In the special case of $G$ being a tree containing a finite non-Dynkin connected subtree it has been suggested by Volodymyr Mazorchuk that the property of $\cat$ being ``locally dual'' to a preprojective algebra could be exploited to give an alternative proof of Theorem \ref{koszthm} along the following lines. Consider the finite tree obtained by removing all vertices of distance greater than $n$ from a given vertex $i$ of $G$. The relations at vertices of distance no greater than $n-1$ from $i$ will be quadratic dual to those of a preprojective algebra. By \cite[Theorem 3.4.2]{EE07}, such an algebra will be Koszul, hence its quadratic dual is as well. Then $L_i$ has a resolution which is linear for positions at least $-(n-1)$. Now let $n$ approach infinity. 
}
\end{myrem}

\subsection{A non-example}
Our assumption that $G$ is orientable such that it contains an infinite directed walk is essential. Indeed, if it does not it may happen that $V_j\backslash\{i\}=\varnothing$ in the second short exact sequence of Lemma \ref{sesdef}. But then $M(V_j\backslash\{i\},j)\langle-1\rangle=L_j\langle-2\rangle$ so that $\mathbf{B}_n(i,j)^{-m}$ will not be generated in degree $m$.

The following minimal concrete example illustrates what goes wrong.
\begin{myex}
{
\rm
\label{nonex}
Let $G$ be the graph
\begin{equation*}
\xymatrix{
1\ar@{-}[r]&2
}
\end{equation*}
and consider the following projective resolution of $L_1$.

\begin{equation*}
\xymatrix{
&&&1\ar@{.>}[d]\ar@{->>}[r]&1\\
&&2\ar@{.>}[d]\ar[r]&2\ar@{.>}[d]&\\
&&1\ar[r]\ar@{.>}[d]&1&\\
&2\ar@{.>}[d]\ar[r]&2&&\\
\dots\ar[r]&1\ar@{.>}[d]&&&\\
\dots\ar[r]&2&&&
}
\end{equation*}

Here the projective module at position $-2$ is indeed generated in degree 3 rather than 2, so the resolution is not linear there. 
}
\end{myex}

\end{document}